\newtheorem{thm}{Theorem}[section]
\newtheorem{lem}[thm]{Lemma}
\newtheorem{cor}[thm]{Corollary}
\newtheorem{defn-lem}[thm]{Definition-Lemma}
\newtheorem{conj}[thm]{Conjecture}
\newtheorem{prop}[thm]{Proposition}
\theoremstyle{remark}
\theoremstyle{definition}
\newtheorem{defn}[thm]{Definition}
\numberwithin{equation}{section}
\def \CS{{\mathcal S}}
\def \Z{{\mathbb Z}}
\def\map#1.#2.{#1 \longrightarrow #2}
\def\rmap#1.#2.{#1 \dasharrow #2}
\DeclareMathOperator{\depth}{depth}
\DeclareMathOperator{\Sdepth}{Sdepth}
\DeclareMathOperator{\Hdepth}{Hdepth}
\def\fb#1.{\underset #1 \to \times}
\def\pr#1.{\Bbb P^{#1}}
\def\ring#1.{\mathcal O_{#1}}
\def\mlist#1.#2.{{#1}_1,{#1}_2,\dots,{#1}_{#2}}
\def\uloopr#1{\ar@'{@+{[0,0]+(-4,5)} @+{[0,0]+(0,10)}
@+{[0,0]+(4,5)}}^{#1}}
\def\dloopr#1{\ar@'{@+{[0,0]+(-4,-5)} @+{[0,0]+(0,-10)}
@+{[0,0]+(4,-5)}}_{#1}}
\def\rloopd#1{\ar@'{@+{[0,0]+(5,4)} @+{[0,0]+(10,0)}
@+{[0,0]+(5,-4)}}^{#1}}
\newcommand{\rdots}{\mathinner{\mkern1mu\raise1pt\hbox{.}\mkern2mu\raise4pt\hbox{.} \mkern2mu\raise7pt\vbox{\kern7pt\hbox{.}}\mkern1mu}} 
\def\lloopd#1{\ar@'{@+{[0,0]+(-5,4)} @+{[0,0]+(-10,0)}
@+{[0,0]+(-5,-4)}}_{#1}}
\long\def\ignore#1{}
\long\def\ignore#1{#1}
\begin{document}

\begin{center}
{\bf On Two Classes of Closely Related Monomial Ideals}
\end{center}

\begin{center}
Maorong Ge, Jiayuan Lin and Yulan Wang
\end{center}

{\small {\bf Abstract} In \cite{Ge2} we obtained a formula for the Hilbert depth of squarefree Veronese ideals in a standard graded polynomial ring by relating it to the Hilbert depth of powers of the irrelevant maximal ideal. In this paper, we prove that these two Hilbert depth formulas are equivalent to each other. Our result reveals that there is a strong connection between these two classes of seemingly unrelated monomial ideals. We conjecture that their Stanley depths are equivalent as well.}

\section{Introduction}

In recent years, two classes of monomial ideals, the squarefree Veronese ideals and the powers of the irrelevant maximal ideal in a graded polynomial ring, have received special attentions (e.g. \cite{Bir}, \cite{Brun2}-\cite{Keller}). The $\textit{Stanley depths}$ of these two classes of ideals have been conjectured in \cite{Cim1}, \cite{Cim2} and \cite{Keller} and partially confirmed in \cite{Bir}, \cite{Cim1}, \cite{Cim2}, \cite{Ge1} and \cite{Keller}. Their $\textit{Hilbert depths}$ have been obtained in \cite{Brun2} and \cite{Ge2}. At the beginning it seemed that there were no apparent connections between these two classes of ideals. Most results were derived solely on one of them. However, in \cite{Ge2} we computed the Hilbert depth of the squarefree Veronese ideals and found that it could be reduced to the calculation of the Hilbert depth of the powers of the irrelevant maximal ideal. In this paper we further exploit this connection and show that the two formulas for the Hilbert depths of these two classes of ideals are actually equivalent.

Let $K$ be a field and $R=K[x_1,\cdots, x_n]$ be the polynomial ring in $n$ variables. Let $M$ be a finitely generated graded $R$-module. Analogous to the concepts of $\textit{Stanley decomposition}$ and $\textit{Stanley depths}$, W. Bruns et al \cite{Brun1} introduced $\textit{Hilbert decomposition}$ and $\textit{Hilbert depth}$.  For the reader's convenience, let us recall the definition of $\textit{Hilbert decomposition}$ and $\textit{Hilbert}$ $\textit{depth}$ from \cite{Brun1}.

\begin{defn}
Let $M$ be a finitely generated graded $S$-module. A $\textit{Hilbert decomposition}$ of $M$ is a finite family

$$\mathscr{H}=\big (S_i,s_i \big )$$

\noindent such that $s_i \in \Z^m$ (where $m=1$ in the standard graded case and $m=n$ in the multigraded case), $S_i$ is a graded $K$-algebra retract of $R$ for each $i$, and 

$$M \cong \underset{i}{\bigoplus} S_i(-s_i)$$

\noindent as a graded $K$-vector space.

The number $\Hdepth \mathscr{H}=\underset{i}{\min} \depth S_i(-s_i)$ is called the $\textit{Hilbert depth}$ of $\mathscr{H}$. The $\textit{Hilbert depth}$ of $M$ is defined to be

$$\Hdepth M=\max\{\Hdepth \mathscr{H}: \mathscr{H} \hskip .1 cm \textit{is a Hilbert decomposition of M}\}.$$

\end{defn} 

Note that each Stanley decomposition is also a Hilbert decomposition, so Hilbert depth provides an upper bound for the Stanley depth. 

For a finitely generated coarsely (or standard) graded $R$-module $M=\bigoplus_{k \in \Z} M_k$, the $\textit{coarse Hilbert series}$ of $M$ is given by the Laurent series $H_M(T)= \sum_{k \in \Z} H(M,k) T^k$, where $H(M,k)=\dim_K M_k$ is the $\textit{Hilbert function}$ of $M$. We say a Laurent series $\sum_{k \in \Z} a_k T^k$ is $\textit{non-negative}$ if $a_k \ge 0$ for all $k$. It is easy to see that any Hilbert series is non-negative. In \cite{Uli}, J. Uliczka proved that the Hilbert depth of $M$ is equal to $\max \{r:(1-T)^r H_{M}(T) \hskip .1 cm \textit{non-negative}\}$. Using this, W. Bruns et al \cite{Brun2} computed the Hilbert depth of the powers of the irrelevant maximal ideal in $R$. Using the result of \cite{Brun2}, we obtained in \cite{Ge2} that the $\textit{coarse Hilbert series}$ of the squarefree Veronese ideal $I_{n,d}$ is $H_{I_{n,d}}(T)=\overset{n-1}{\underset{i=d-1}{\sum}} \binom{i}{d-1} T^d (1-T)^{-n+i-d+1}$, and its Hilbert depth is equal to $d+\left \lfloor \binom{n}{d+1}/\binom{n}{d} \right\rfloor=d+\left \lfloor \frac{n-d}{d+1} \right\rfloor=d-1+\left \lceil \frac{n-(d-1)}{d+1} \right\rceil$. We remark that the numbers in the formulas for their Hilbert depth of both classes of ideals are exactly those appeared in the corresponding conjectural formulas for their Stanley depths. We expect the following to be true.

\begin{conj}
Let $R=K[x_1,\cdots, x_n]$ be the polynomial ring in $n$ variables. Let $I_{n,d}$ be the squarefree Veronese ideal generated by all squarefree monomials of degree $d$ and let $\mathfrak{m}$ be the irrelevant maximal ideal in $R$. Then the formula $\Sdepth (\mathfrak{m}^s)=
\left \lceil \frac{n}{s+1} \right\rceil$ implies $\Sdepth (I_{n,d})=d-1+\left \lceil \frac{n-(d-1)}{d+1} \right\rceil$ and vice versa, where $s$ is a positive integer.
\end{conj}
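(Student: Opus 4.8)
The plan is to reduce the entire conjecture to a single \emph{unconditional} identity relating the two Stanley depths, namely
$$\Sdepth_R(I_{n,d}) = (d-1) + \Sdepth_{R'}\big((\mathfrak m')^{d}\big), \qquad R' = K[x_1,\dots,x_{n-d+1}], \quad \mathfrak m' = (x_1,\dots,x_{n-d+1}),$$
which is the Stanley-depth analogue of the Hilbert-depth reduction already carried out in \cite{Ge2}. Granting this identity, the equivalence is pure arithmetic: setting $N=n-d+1$ and $s=d$ turns $\lceil N/(s+1)\rceil$ into $\lceil \frac{n-(d-1)}{d+1}\rceil$, so $\Sdepth(\mathfrak m^s)=\lceil n/(s+1)\rceil$ for all $n,s$ forces $\Sdepth(I_{n,d})=d-1+\lceil\frac{n-(d-1)}{d+1}\rceil$; conversely, given the second formula for all admissible $(n,d)$, put $d=s$ and $n=N+s-1$ (legitimate, since then $n\ge d$) to recover $\Sdepth(\mathfrak m_N^s)=\lceil N/(s+1)\rceil$ for every $N\ge 1$ and $s\ge 1$.

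A preliminary observation trims the work to the constructive half. Since every Stanley decomposition is a Hilbert decomposition, $\Sdepth \le \Hdepth$; and the Hilbert depths $\Hdepth(\mathfrak m^s)=\lceil n/(s+1)\rceil$ and $\Hdepth(I_{n,d})=d-1+\lceil\frac{n-(d-1)}{d+1}\rceil$ are already known from \cite{Brun2} and \cite{Ge2}. Thus both conjectural Stanley-depth values are automatically \emph{upper} bounds, and each half of the conjecture amounts only to transferring the matching \emph{lower} bound. Concretely, I would prove the two constructive inequalities $\Sdepth(I_{n,d}) \ge (d-1)+\Sdepth((\mathfrak m')^{d})$ and $\Sdepth((\mathfrak m')^{d}) \ge \Sdepth(I_{n,d})-(d-1)$ separately: the first lets lower-bound information flow from $\mathfrak m^s$ to $I_{n,d}$, the second sends it back, and together they yield the displayed identity. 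Crucially, because the upper bounds are free, I never need to prove a lifted decomposition \emph{optimal}, only that it attains the required depth.

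To build these inequalities I would use the Herzog--Vladoiu--Zheng poset model, in which $\Sdepth$ is the maximum over interval partitions of the minimum ``number of free variables at the top'' of an interval. For $I_{n,d}$ this poset is the truncated Boolean lattice $\{A\subseteq[n]:|A|\ge d\}$, whereas for $(\mathfrak m')^{d}$ it is a multigraded poset of monomials of degree $\ge d$ in $n-d+1$ variables. The technical engine should be the classical partial-sum (``stars-and-bars'') bijection between compositions and subsets, which underlies the lattice-path computations of \cite{Brun2}; the aim is to promote it to an order-respecting correspondence that carries an interval partition of one poset to an interval partition of the other while shifting the free-variable count uniformly by $d-1$ (the shift arising from adjoining $x_{n-d+2},\dots,x_n$ as free factors). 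The hard part, and the reason the statement remains conjectural while its Hilbert-depth analogue is a theorem, lies exactly here: Hilbert depth sees only graded dimensions, so the Hilbert-series identity of \cite{Ge2} suffices, but Stanley depth depends on the genuine decomposition into Stanley spaces. One must therefore match the \emph{squarefree} interval structure of the truncated Boolean lattice with the \emph{multigraded} interval structure of $(\mathfrak m')^{d}$ in a way that respects intervals, not merely cardinalities — a rigidity the dimension-counting bijection is not guaranteed to possess, and establishing it is the crux of any proof.
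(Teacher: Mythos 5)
First, note that the statement you are proving is Conjecture 1.2 of the paper: the authors do not prove it, and the whole point of the paper is that only its Hilbert-depth analogue (Theorem 1.3, via Theorem 1.4 and Proposition 1.5) is a theorem. So there is no proof in the paper to match yours against, and your proposal does not close the gap either: it is a reduction plus a research plan, not a proof. Your arithmetic reduction is correct and is in fact identical to the paper's Proposition 1.5 transplanted from $\Hdepth$ to $\Sdepth$ (set $s=d$, $N=n-d+1$ one way; $d=s$, $n=N+s-1$ the other way), and your preliminary observation is a genuine, correct simplification not made in the paper: since every Stanley decomposition is a Hilbert decomposition, the known values $\Hdepth(\mathfrak{m}^s)=\left\lceil \frac{n}{s+1}\right\rceil$ from \cite{Brun2} and $\Hdepth(I_{n,d})=d-1+\left\lceil \frac{n-(d-1)}{d+1}\right\rceil$ from \cite{Ge2} give both conjectured Stanley depths as unconditional upper bounds, so each half of the equivalence needs only one lower-bound transfer inequality, namely $\Sdepth(I_{n,d})\ge (d-1)+\Sdepth((\mathfrak{m}')^{d})$ for one direction and the reverse inequality for the other.

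The genuine gap is exactly where you concede it: neither transfer inequality is proved. The paper's proof of the Hilbert-depth identity rests entirely on the coarse Hilbert-series equation $H_{I_{n,d}}(T)=(1-T)^{-(d-1)}H_{\hat{\mathfrak{m}}^d}(T)$ combined with Uliczka's characterization $\Hdepth(M)=\max\{r:(1-T)^r H_M(T)\ \textit{non-negative}\}$, and this mechanism is unavailable for Stanley depth: equality of Hilbert series says nothing about whether an interval partition of the characteristic poset of $(\mathfrak{m}')^{d}$ (points $\alpha$ with $|\alpha|\ge d$ in $\{0,\dots,d\}^{n-d+1}$) can be converted into an interval partition of the truncated Boolean lattice $\{A\subseteq[n]:|A|\ge d\}$ with the free-variable count shifted uniformly by $d-1$, or vice versa. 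Your proposed ``order-respecting promotion'' of the stars-and-bars bijection is only named, not constructed, and there is no evidence such a bijection can be made to respect intervals rather than merely graded dimensions; indeed the standard composition-to-subset map does not carry divisibility in the multigraded poset to containment of sets in any obvious way. Until that correspondence (or some other argument for the two inequalities) is exhibited, the proposal establishes nothing beyond what the paper already proves, and the statement remains, as the authors say, a conjecture.
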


In this paper we prove a Hilbert depth version of Conjecture $1.2$.

\begin{thm}
Let $I_{n,d}$ be the squarefree Veronese ideal generated by all squarefree monomials of degree $d$ and let $\mathfrak{m}$ be the irrelevant maximal ideal in $R=K[x_1,\cdots, x_n]$. Then the formulas $\Hdepth (\mathfrak{m}^s)=
\left \lceil \frac{n}{s+1} \right\rceil$ and $\Hdepth (I_{n,d})=d-1+\left \lceil \frac{n-(d-1)}{d+1} \right\rceil$ are equivalent, that is, they imply each other, where $s$ is a positive integer.
\end{thm}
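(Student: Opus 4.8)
The plan is to reduce the whole equivalence to a single, formula-free identity and then read off both implications from it. Writing $\mathfrak{m}_N$ for the irrelevant maximal ideal of $K[x_1,\dots,x_N]$, the goal is to prove the master identity
$$\Hdepth(I_{n,d}) = (d-1) + \Hdepth\big(\mathfrak{m}_{n-d+1}^{d}\big),$$
valid for all $1 \le d \le n$ with no reference to either closed form. Granting this, the equivalence is immediate: the assignment $N = n-d+1$, $s = d$ is a bijection between the pairs $(n,d)$ with $1 \le d \le n$ and the pairs $(N,s)$ with $N \ge 1$ and $s \ge 1$, and under it $\lceil N/(s+1)\rceil = \lceil (n-d+1)/(d+1)\rceil$. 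Hence assuming $\Hdepth(\mathfrak{m}_N^s)=\lceil N/(s+1)\rceil$ and substituting into the master identity yields exactly $\Hdepth(I_{n,d}) = d-1+\lceil (n-d+1)/(d+1)\rceil$; conversely, the $I_{n,d}$ formula inverted through the master identity (at $n=N+s-1$, $d=s$) returns the $\mathfrak{m}^s$ formula at the general parameters $(N,s)$.

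The heart of the argument, and the step I expect to be the main obstacle, is a Hilbert-series identity feeding the master identity through Uliczka's criterion. Starting from the series computed in \cite{Ge2}, multiplying by $(1-T)^{d-1}$ and reindexing $p = n-i$ gives at once
$$(1-T)^{d-1} H_{I_{n,d}}(T) = T^{d} \sum_{p=1}^{n-d+1} \binom{n-p}{d-1} (1-T)^{-p}.$$
I would then claim that the right-hand side is precisely $H_{\mathfrak{m}_{n-d+1}^{d}}(T) = \sum_{k \ge d} \binom{n-d+k}{n-d} T^{k}$, the Hilbert series of $\mathfrak{m}^{d}$ in $n-d+1$ variables. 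Comparing coefficients of $T^{d+j}$ reduces this to the Chu--Vandermonde convolution
$$\sum_{p=1}^{n-d+1} \binom{n-p}{d-1} \binom{p-1+j}{p-1} = \binom{n+j}{n-d},$$
which one verifies by extracting $[x^{n-d}]$ from $(1-x)^{-d}(1-x)^{-(j+1)} = (1-x)^{-(d+j+1)}$. This is the genuinely computational core; everything after it is bookkeeping.

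With the Hilbert-series identity in hand, the master identity follows from Uliczka's theorem $\Hdepth M = \max\{r : (1-T)^r H_M(T)\ \text{non-negative}\}$, whose admissible set is downward closed in $r$. Writing $r = (d-1)+r'$, we have $(1-T)^r H_{I_{n,d}}(T) = (1-T)^{r'} H_{\mathfrak{m}_{n-d+1}^{d}}(T)$. For $r' < 0$ the factor $(1-T)^{r'}$ has non-negative coefficients, so the product of the two non-negative series is non-negative; thus every $r \le d-1$ is admissible, and the optimal $r$ exceeds $d-1$ by exactly the largest $r' \ge 0$ keeping $(1-T)^{r'} H_{\mathfrak{m}_{n-d+1}^{d}}(T)$ non-negative. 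That largest $r'$ is by definition $\Hdepth(\mathfrak{m}_{n-d+1}^{d})$, which establishes the master identity.

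The only place real work is needed is the Vandermonde reduction; the passage through Uliczka's criterion and the parameter bijection are formal. To guard against an off-by-one slip in the reindexing, I would also record the two boundary checks: $d=1$, where $I_{n,1}=\mathfrak{m}_n$ and the series identity is a tautology, and $d=n$, where $I_{n,n}$ is the principal ideal $(x_1\cdots x_n)$ and $\mathfrak{m}_1^{n}=(y^{n})$, so both sides equal $T^{n}/(1-T)$.
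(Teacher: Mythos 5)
Your proposal is correct and takes essentially the same route as the paper: your master identity is the paper's Theorem 1.4, your parameter bijection is its Proposition 1.5, your series identity $(1-T)^{d-1}H_{I_{n,d}}(T)=H_{\mathfrak{m}_{n-d+1}^{d}}(T)$ is the paper's central equation, and your Chu--Vandermonde convolution is exactly Lemma 4.1 after the reindexing $p=n-i$, proved by the same coefficient extraction from $(1-x)^{-d}\cdot(1-x)^{-(j+1)}=(1-x)^{-(d+j+1)}$. The only differences are cosmetic streamlining: you start directly from the closed form of $H_{I_{n,d}}(T)$ in \cite{Ge2}, bypassing the paper's detour through Proposition 2.3 and Equations (4.2)--(4.5), and you make explicit the downward-closedness of the admissible set in Uliczka's criterion, which the paper leaves implicit.
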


In Proposition $1.5$ we show that Theorem $1.3$ follows easily from the following result.

\begin{thm}
Let $I_{n,d}$ be the squarefree Veronese ideal generated by all squarefree monomials of degree $d$ and let $\mathfrak{m}$ be the irrelevant maximal ideal in $R=K[x_1,\cdots, x_n]$. Let 
$\hat{\mathfrak{m}}=\mathfrak{m} \cap \hat{R}$ be the irrelevant maximal ideal in $\hat{R}=K[x_1,\cdots, x_{n-d+1}]$ and $\hat{\mathfrak{m}}^d$ be the ideal generated by all monomials of degree $d$ in $\hat{R}$. Then $\Hdepth (\hat{\mathfrak{m}}^d) +d-1= \Hdepth (I_{n,d})$.
\end{thm}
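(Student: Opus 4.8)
The plan is to reduce Theorem $1.4$ to a single identity of Hilbert series. By Uliczka's theorem recalled above, $\Hdepth M$ equals the largest integer $r$ for which $(1-T)^r H_M(T)$ is non-negative, and the point is that multiplying a Hilbert series by $(1-T)^{d-1}$ shifts this invariant by exactly $d-1$. Concretely, I would first establish the series identity
$$(1-T)^{d-1}\,H_{I_{n,d}}(T)=H_{\hat{\mathfrak{m}}^d}(T).\qquad(\dagger)$$
Granting $(\dagger)$, for every integer $r$ we have $(1-T)^{r}H_{\hat{\mathfrak{m}}^d}(T)=(1-T)^{r+d-1}H_{I_{n,d}}(T)$, so the left-hand side is non-negative precisely when $(1-T)^{r+d-1}H_{I_{n,d}}(T)$ is. Taking the maximum over $r$ and invoking Uliczka's formula on each side gives $\Hdepth(\hat{\mathfrak{m}}^d)=\Hdepth(I_{n,d})-(d-1)$, which is exactly the assertion. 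Thus the whole theorem rests on $(\dagger)$.

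To verify $(\dagger)$ I would first record the two series explicitly. Since $\hat{\mathfrak{m}}^d$ is precisely the degree-$\ge d$ part of the polynomial ring $\hat R=K[x_1,\dots,x_{n-d+1}]$ in $n-d+1$ variables, counting monomials in each degree gives
$$H_{\hat{\mathfrak{m}}^d}(T)=\sum_{k\ge d}\binom{k+n-d}{n-d}T^k=T^d\sum_{m\ge 0}\binom{m+n}{n-d}T^m.$$
For the other side I would take the formula $H_{I_{n,d}}(T)=\sum_{i=d-1}^{n-1}\binom{i}{d-1}T^d(1-T)^{-n+i-d+1}$ recalled in the introduction, multiply through by $(1-T)^{d-1}$, and reindex by $\ell=n-1-i$ to obtain
$$(1-T)^{d-1}H_{I_{n,d}}(T)=T^d\sum_{\ell=0}^{n-d}\binom{n-1-\ell}{d-1}(1-T)^{-1-\ell}.$$
Expanding $(1-T)^{-1-\ell}=\sum_{p\ge 0}\binom{p+\ell}{\ell}T^p$ and extracting the coefficient of $T^{d+m}$ on both sides, the identity $(\dagger)$ becomes the purely combinatorial statement
$$\sum_{\ell=0}^{n-d}\binom{n-1-\ell}{d-1}\binom{m+\ell}{\ell}=\binom{m+n}{n-d}\qquad(m\ge 0).\qquad(\heartsuit)$$

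The only genuinely computational point is $(\heartsuit)$, and I expect it to fall to a standard Vandermonde convolution. Writing $\binom{n-1-\ell}{d-1}=\binom{n-1-\ell}{(n-d)-\ell}=\binom{(d-1)+(n-d-\ell)}{n-d-\ell}$ exhibits the summand as the product $[y^{\ell}](1-y)^{-(m+1)}\cdot[y^{\,n-d-\ell}](1-y)^{-d}$, so the left-hand side of $(\heartsuit)$ is the Cauchy coefficient $[y^{n-d}]\big((1-y)^{-(m+1)}(1-y)^{-d}\big)=[y^{n-d}](1-y)^{-(m+d+1)}=\binom{m+n}{n-d}$, as required; a short induction on $n-d$ yields the same identity. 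As a sanity check one can confirm $(\dagger)$ directly in small cases: for $n=3,\ d=2$ one computes $(1-T)H_{I_{3,2}}(T)=\sum_{k\ge 2}(k+1)T^k=H_{\hat{\mathfrak{m}}^2}(T)$, which also pins down the normalization of the shift by $d-1$. With $(\dagger)$ in hand, the reduction through Uliczka's criterion described above finishes the proof, and the passage to Theorem $1.3$ is then the routine deduction announced in Proposition $1.5$.
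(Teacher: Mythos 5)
Your proposal is correct and is essentially the paper's own argument: you reduce the theorem via Uliczka's criterion to the series identity $(1-T)^{d-1}H_{I_{n,d}}(T)=H_{\hat{\mathfrak{m}}^d}(T)$ and then verify it coefficientwise, where your identity $(\heartsuit)$ is exactly the paper's Lemma 4.1 (under the substitution $i=n-1-\ell$, $k=m$, using $\binom{m+n}{n-d}=\binom{n+m}{m+d}$), proved by the same generating-function convolution $(1-y)^{-d}\cdot(1-y)^{-(m+1)}=(1-y)^{-(m+d+1)}$. The only cosmetic difference is that you start from the closed form of $H_{I_{n,d}}(T)$ quoted from \cite{Ge2} and the direct monomial count for $H_{\hat{\mathfrak{m}}^d}(T)$, whereas the paper re-derives both from fine Hilbert series (Propositions 2.1, 2.3 and Corollary 3.2) before performing the same comparison.
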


\begin{prop}
Theorem $1.4$ implies Theorem $1.3$.
\end{prop}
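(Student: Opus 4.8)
The plan is to exploit that Theorem 1.4 rewrites the Hilbert depth of the squarefree Veronese ideal $I_{n,d}$ in terms of the Hilbert depth of a power of a maximal ideal, namely $\hat{\mathfrak{m}}^d$, where $\hat{\mathfrak{m}}$ is the maximal ideal of the polynomial ring $\hat{R}=K[x_1,\dots,x_{n-d+1}]$ on $n-d+1$ variables. The crucial observation is that $\hat{\mathfrak{m}}^d$ is exactly the $d$-th power of the irrelevant maximal ideal in a polynomial ring on $N:=n-d+1$ variables, so the formula for $\Hdepth(\mathfrak{m}^s)$ applies to it with the number of variables equal to $N$ and the exponent $s=d$. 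Under that substitution one has $\lceil N/(d+1)\rceil=\lceil (n-d+1)/(d+1)\rceil=\lceil (n-(d-1))/(d+1)\rceil$, which is precisely the ceiling appearing in the formula for $\Hdepth(I_{n,d})$.

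For the first implication, I would assume the formula $\Hdepth(\mathfrak{m}^s)=\lceil n/(s+1)\rceil$ and apply it to $\hat{\mathfrak{m}}^d$ to get $\Hdepth(\hat{\mathfrak{m}}^d)=\lceil (n-(d-1))/(d+1)\rceil$. Combining this with the identity $\Hdepth(\hat{\mathfrak{m}}^d)+d-1=\Hdepth(I_{n,d})$ of Theorem 1.4 yields $\Hdepth(I_{n,d})=d-1+\lceil (n-(d-1))/(d+1)\rceil$, as desired.

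For the converse, I would assume $\Hdepth(I_{n,d})=d-1+\lceil (n-(d-1))/(d+1)\rceil$ and, given an arbitrary polynomial ring on $N$ variables and an arbitrary positive integer $s$, reparametrize by setting $d=s$ and $n=N+s-1$. Then $\hat{R}$ has exactly $N$ variables, $\hat{\mathfrak{m}}^d=\mathfrak{m}^s$, and Theorem 1.4 together with the assumed formula gives $\Hdepth(\mathfrak{m}^s)=\Hdepth(I_{n,d})-(d-1)=\lceil N/(s+1)\rceil$, which is the formula for $\Hdepth(\mathfrak{m}^s)$.

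The main point to verify carefully --- the only genuine obstacle --- is that this reparametrization is a bijection between the admissible parameter pairs, so that a universal statement in one pair of parameters really is equivalent to a universal statement in the other. Concretely, $(n,d)\mapsto(N,s)=(n-d+1,d)$ sends the pairs with $1\le d\le n$ onto all pairs with $N\ge1$ and $s\ge1$, with inverse $(N,s)\mapsto(n,d)=(N+s-1,s)$; checking that both maps stay inside the admissible ranges (in particular that $n\ge d$ follows from $N\ge1$, so that $I_{n,d}$ is a genuine nonzero ideal) is the one place where a slip would break the equivalence.
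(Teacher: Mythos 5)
Your proof is correct and follows essentially the same route as the paper: the forward direction applies the formula $\Hdepth(\mathfrak{m}^s)=\lceil n/(s+1)\rceil$ in the ring on $n-d+1$ variables with $s=d$, and the converse applies the Veronese formula to $I_{n+s-1,s}$ (i.e.\ your reparametrization $n=N+s-1$, $d=s$) before invoking Theorem 1.4. Your explicit check that $(n,d)\mapsto(n-d+1,d)$ bijects the admissible parameter ranges is a sound extra precaution that the paper leaves implicit.
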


\begin{proof}

Suppose that $\Hdepth \mathfrak{m}^s=\left \lceil \frac{n}{s+1} \right\rceil$ is true for any positive integer $s$ and the irrelevant maximal ideal $\mathfrak{m}$ in $R=K[x_1,\cdots, x_n]$. Then applying this formula to $\hat{\mathfrak{m}}=\mathfrak{m} \cap \hat{R}$ in $\hat{R}=K[x_1,\cdots, x_{n-d+1}]$ for $s=d$ gives that $\Hdepth (\hat{\mathfrak{m}}^d)=\left \lceil \frac{n-d+1}{d+1} \right\rceil=\left \lceil \frac{n-(d-1)}{d+1} \right\rceil$. By Theorem $1.4$, we have $\Hdepth (I_{n,d})=\Hdepth (\hat{\mathfrak{m}}^d) +d-1= \left \lceil \frac{n-(d-1)}{d+1} \right\rceil+d-1$.

Conversely, suppose that $\Hdepth (I_{n,d})=d-1+\left \lceil \frac{n-(d-1)}{d+1} \right\rceil$ is true for any $n \ge d$. Applying this formula to the squarefree Veronese ideal $I_{n+s-1,s}$ generated by all squarefree monomials of degree $s$ in the polynomial ring $K[x_1, \cdots, x_n, x_{n+1}, \cdots, x_{n+s-1}]$, we have that $\Hdepth (I_{n+s-1,s})=s-1+\left \lceil \frac{n+s-1-(s-1)}{s+1} \right\rceil=s-1+\left \lceil \frac{n}{s+1} \right\rceil$. By Theorem $1.4$, $\Hdepth (\mathfrak{m}^s) +s-1= \Hdepth (I_{n+s-1,s})=s-1+\left \lceil \frac{n}{s+1} \right\rceil$. So $\Hdepth (\mathfrak{m}^s)= \left \lceil \frac{n}{s+1} \right\rceil$. This completes the proof of Proposition $1.5$.
\end{proof}

Now we only need to prove Theorem $1.4$. The proof of Theorem $1.4$ is very simple. We first deduce the fine (or multi-graded) Hilbert series for both squarefree Veronese ideal $I_{n,d}$ (as a $\Z^n$-graded module) in $R=K[x_1, \cdots, x_{n}]$ and $\hat{\mathfrak{m}}^d$ (as a $\Z^{n-d+1}$-graded module) in $\hat{R}=K[x_1,\cdots, x_{n-d+1}]$. Then we show that $H_{I_{n,d}}(T)=(1-T)^{-(d-1)}H_{\hat{\mathfrak{m}}^d} (T)$. Theorem $1.4$ follows immediately by combining this equality with J. Uliczka's result that $\Hdepth (M)=\max \{r:(1-T)^r H_{M}(T) \hskip .1 cm \textit{non-negative}\}$.

\section{Fine and coarse Hilbert series of the squarefree \\ Veronese ideal}

Denote $\CS$ the set of subsets of $\{1, \cdots, n\}$ with cardinality at least $d$. We first deduce the fine Hilbert series of the squarefree Veronese ideal $I_{n,d}$.

\begin{prop}
The fine Hilbert series of the squarefree Veronese ideal $I_{n,d}$ is given by

$$H_{I_{n,d}} (T_1, \cdots, T_n)=\overset{n}{\underset{i=0}{\prod}} (1-T_i)^{-1} \underset{S \in \CS}{\sum} T^{S} (1-T)^{S^c},$$

\noindent where $S^c$ is the complement of $S$ in the set $\{1, \cdots, n\}$, $T^{S}=\underset{i \in S}{\prod} T_i$, and $(1-T)^{S^c}=\underset{j \in S^c}{\prod} (1-T_j)$.
\end{prop}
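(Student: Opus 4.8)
The plan is to exploit the fact that $I_{n,d}$ is a monomial ideal, so its fine Hilbert series is simply the sum of $T^{\mathbf a}=T_1^{a_1}\cdots T_n^{a_n}$ over all monomials $x^{\mathbf a}$ lying in $I_{n,d}$, since each multigraded component is at most one-dimensional. First I would identify exactly which monomials belong to $I_{n,d}$: a monomial $x^{\mathbf a}$ is divisible by some squarefree generator of degree $d$ precisely when its support $\supp(\mathbf a)=\{i:a_i\ge 1\}$ has cardinality at least $d$. Thus
$$H_{I_{n,d}}(T_1,\ldots,T_n)=\sum_{\substack{\mathbf a\in\Z_{\ge0}^n\\ |\supp(\mathbf a)|\ge d}} T^{\mathbf a}.$$

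Next I would organize this sum according to the exact support. For a fixed $S\subseteq\{1,\ldots,n\}$, the monomials with support exactly equal to $S$ are those with $a_i\ge 1$ for $i\in S$ and $a_j=0$ for $j\in S^c$; summing the corresponding geometric series independently in each variable gives $\prod_{i\in S} T_i/(1-T_i)=T^S/\prod_{i\in S}(1-T_i)$. Collecting over all admissible supports, that is over all $S\in\CS$, yields
$$H_{I_{n,d}}(T_1,\ldots,T_n)=\sum_{S\in\CS}\frac{T^S}{\prod_{i\in S}(1-T_i)}.$$

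Finally, to put this into the stated form I would factor out the common denominator $\prod_{i=1}^n(1-T_i)$. Writing $\prod_{i=1}^n(1-T_i)=\prod_{i\in S}(1-T_i)\cdot (1-T)^{S^c}$ and cancelling the factor $\prod_{i\in S}(1-T_i)$ in each summand produces exactly
$$\prod_{i=1}^n(1-T_i)^{-1}\sum_{S\in\CS} T^S (1-T)^{S^c},$$
which is the claimed expression.

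I do not anticipate a genuine obstacle here, as the argument is a direct bookkeeping computation. The only points demanding care are the correct translation of ideal membership into the combinatorial criterion ``$\supp(\mathbf a)$ has size at least $d$'' (this encodes divisibility by a degree-$d$ squarefree monomial), and the clean partition of monomials by their \emph{exact} support, which is what makes the per-variable geometric series factor independently. The closing algebraic rearrangement is routine once the common denominator is introduced.
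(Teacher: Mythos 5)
Your proof is correct and follows essentially the same route as the paper's: translating membership in $I_{n,d}$ into the condition $|\supp(\mathbf a)|\ge d$, partitioning monomials by their exact support $S$ (which is what the paper calls grouping terms ``containing the same distinct factors''), summing the geometric series $\prod_{i\in S}T_i/(1-T_i)$, and factoring out the common denominator $\prod_{i=1}^n(1-T_i)$. No gaps; your write-up merely makes explicit the exact-support partition that the paper states more tersely.
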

\begin{proof}

The squarefree Veronese ideal $I_{n,d}$ is generated by all squarefree monomials of degree $d$ in $R=K[x_1, \cdots, x_n]$. A monomial $x_1^{\alpha_1} \cdots  x_n^{\alpha_n}$ is in $I_{n,d}$ if and only if at least $d$ of the $\alpha_i$'s are positive. So the fine Hilbert series of the squarefree Veronese ideal $I_{n,d}$ is given by

$$H_{I_{n,d}} (T_1, \cdots, T_n)=\underset{\text{at least d }\alpha_i>0 } {\sum} T_1^{\alpha_1} \cdots  T_n^{\alpha_n}.$$

By arranging the terms containing the same distinct factors in $H_{I_{n,d}} (T_1, \cdots, T_n)$ into the same group, we have that

$$H_{I_{n,d}} (T_1, \cdots, T_n)=\underset{S \in \CS} {\sum} T^S (\underset{\alpha_i \ge 0} {\sum} \underset{i \in S} {\prod} T_i^{\alpha_i})=\underset{S \in \CS} {\sum} \underset{i \in S} {\prod} \frac{T_i}{1-T_i}=\overset{n}{\underset{i=0}{\prod}} (1-T_i)^{-1} \underset{S \in \CS}{\sum} T^{S} (1-T)^{S^c}.$$
\end{proof}

Let $T_i=T$ in Proposition $2.1$, we immediately have that the coarse Hilbert series of the squarefree Veronese ideal is $H_{I_{n,d}}(T)= \underset{S \in \CS} {\sum} \underset{i \in S} {\prod} \frac{T}{1-T}=\overset{n}{\underset{k=d}{\sum}} \binom{n}{k} \frac{T^k}{(1-T)^k}$. Recall that we have obtained $H_{I_{n,d}}(T)=\overset{n-1}{\underset{i=d-1}{\sum}} \binom{i}{d-1} T^d (1-T)^{-n+i-d+1}$ in \cite{Ge2}. We will show that these two formulas are equal. We need the following lemma.

\begin{lem} For any integer $0 \le i \le n-d$, we have 

$$\binom{i+d-1}{i}=\overset{i}{\underset{l=0}{\sum}} \binom{n}{i-l} (-1)^l \binom{n-d-i+l}{l}.$$
\end{lem}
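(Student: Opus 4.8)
The plan is to establish this purely combinatorial identity by the method of generating functions, reading the right-hand side as a single coefficient extraction. First I would re-index by setting $j=i-l$, so that as $l$ runs from $0$ to $i$ the new index $j$ runs from $i$ down to $0$ and the sum becomes $\sum_{j=0}^{i}(-1)^{i-j}\binom{n}{j}\binom{n-d-j}{i-j}$. The point of this rewriting is that the factor $(-1)^{i-j}\binom{n-d-j}{i-j}$ is exactly the coefficient of $x^{i-j}$ in the binomial expansion of $(1-x)^{n-d-j}$, while $\binom{n}{j}$ multiplies the monomial $x^{j}$. Hence the whole right-hand side equals the coefficient of $x^{i}$ in $\sum_{j}\binom{n}{j}x^{j}(1-x)^{n-d-j}$.

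The second step is to collapse this sum. Factoring out $(1-x)^{-d}$ gives $(1-x)^{-d}\sum_{j=0}^{n}\binom{n}{j}x^{j}(1-x)^{n-j}$, and the inner sum is just $(x+(1-x))^{n}=1$ by the binomial theorem. Thus the generating function is simply $(1-x)^{-d}$, whose coefficient of $x^{i}$ is $\binom{i+d-1}{i}$. This is precisely the left-hand side, which would complete the argument.

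The step I expect to require the most care is the bookkeeping that lets me pass from the finite sum in the statement to the full sum $\sum_{j=0}^{n}$ needed for the clean binomial collapse. For $j>n-d$ the exponent $n-d-j$ is negative, so $(1-x)^{n-d-j}$ must be read as a formal power series with generalized binomial coefficients; I would therefore justify the manipulation at the level of formal power series in $x$. The saving observation is that, because we only ever extract the coefficient of $x^{i}$ with $i\le n-d$, every term with $j>i$ contributes nothing (its lowest-degree term is $x^{j}$ with $j>i$), so only the indices $j\le i\le n-d$ --- for which the top entry $n-d-j$ is nonnegative and the binomials are the ordinary ones appearing in the statement --- actually matter. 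Making this truncation explicit shows that the coefficient extracted from the extended sum agrees with the original finite sum, and the identity follows. An alternative, should a generating-function-free argument be preferred, is induction on $i$ using the Pascal recurrence for the binomials on the right, but the coefficient-extraction proof above is shorter and more transparent.
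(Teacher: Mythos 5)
Your proof is correct, and the point you flag as delicate is handled properly: extending the sum to $j\le n$ is harmless because $x^j(1-x)^{n-d-j}$ has order exactly $j$ as a formal power series, while for the surviving indices $j\le i\le n-d$ the exponent $n-d-j$ is nonnegative, so the binomials are the ordinary ones in the statement. The paper takes a different and slightly shorter route: it compares the coefficient of $T^i$ on the two sides of the single identity $(1+T)^{i+d-1}=(1+T)^n\cdot(1+T)^{-(n-d-i+1)}$. The Cauchy product on the right yields the lemma's sum verbatim, via $\binom{-(n-d-i+1)}{l}=(-1)^l\binom{n-d-i+l}{l}$, so no re-indexing and no truncation argument are needed; the price is that the auxiliary series depends on $i$, so a separate identity is invoked for each value of $i$. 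Your version instead uses one fixed generating function, $(1-x)^{-d}=\sum_{j=0}^{n}\binom{n}{j}x^j(1-x)^{n-d-j}$, which proves the identity for all $0\le i\le n-d$ simultaneously and, incidentally, runs the computation of Proposition 2.3 in reverse: the binomial collapse $\sum_{j}\binom{n}{j}x^j(1-x)^{n-j}=1$ is exactly the mechanism at work there. Both arguments are Vandermonde-type coefficient extractions; the paper's buys brevity, yours buys uniformity in $i$ and makes the structural link to how the lemma is actually applied more visible.
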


\begin{proof}

The coefficient of $T^i$ in $(1+T)^n \cdot (1+T)^{-(n-d-i+1)}$ is 
$$\overset{i}{\underset{l=0}{\sum}} \binom{n}{i-l} \binom{-(n-d-i+1)}{l}=\overset{i}{\underset{l=0}{\sum}} \binom{n}{i-l} (-1)^l \binom{n-d-i+l}{l}.$$

Now Lemma $2.2$ follows easily by comparing the coefficients of $T^i$ on both sides of the identity $(1+T)^{i+d-1}=(1+T)^n \cdot (1+T)^{-(n-d-i+1)}$.
\end{proof}

\begin{prop}
$\overset{n}{\underset{k=d}{\sum}} \binom{n}{k} T^k (1-T)^{-k}=\overset{n-1}{\underset{i=d-1}{\sum}} \binom{i}{d-1} T^d (1-T)^{-n+i-d+1}.$
\end{prop}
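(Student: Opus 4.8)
The plan is to rewrite the left-hand side so that it becomes, term for term, the right-hand side; the bridge between the two is Lemma 2.2. Both expressions are Laurent series in the single variable $T$, so it is enough to expand each in the family $T^{d}(1-T)^{-e}$, $e=d,\dots,n$, and compare coefficients. The right-hand side is already in this shape: writing $e=n-i+d-1$, so that $(1-T)^{-n+i-d+1}=(1-T)^{-e}$, the index $e$ ranges over $d,\dots,n$ and the coefficient of $T^{d}(1-T)^{-e}$ is $\binom{i}{d-1}=\binom{n+d-1-e}{d-1}$.

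To put the left-hand side into the same form, I would use $T=1-(1-T)$ to expand $T^{k-d}=\sum_{l=0}^{k-d}\binom{k-d}{l}(-1)^{l}(1-T)^{l}$ and substitute $T^{k}=T^{d}T^{k-d}$ into $\sum_{k=d}^{n}\binom{n}{k}T^{k}(1-T)^{-k}$. This produces a double sum of terms $T^{d}(1-T)^{l-k}$; collecting them according to the value $e=k-l$ of the negated exponent of $1-T$, one finds that $e$ again runs over $d,\dots,n$ and that the coefficient of $T^{d}(1-T)^{-e}$ equals $\sum_{k=e}^{n}\binom{n}{k}\binom{k-d}{k-e}(-1)^{k-e}$.

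The whole proposition then reduces to the single identity $\sum_{k=e}^{n}\binom{n}{k}\binom{k-d}{k-e}(-1)^{k-e}=\binom{n+d-1-e}{d-1}$ for each $e$ with $d\le e\le n$. This is precisely where Lemma 2.2 does the work: after the substitution $l=k-e$ the left side becomes $\sum_{l=0}^{n-e}\binom{n}{e+l}(-1)^{l}\binom{e+l-d}{l}$, and choosing $i=n-e$ --- which lies in the admissible range $0\le i\le n-d$ exactly because $d\le e\le n$ --- makes the three binomial factors coincide with the right-hand side of Lemma 2.2, whose left-hand side $\binom{i+d-1}{i}$ equals the target $\binom{n+d-1-e}{d-1}$. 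I expect this matching of indices to be the only genuinely delicate point; the rest is routine expansion with the binomial theorem, and no separate argument for equating coefficients is needed, since the left-hand side is transformed verbatim into the right-hand side. As a sanity check one may instead substitute $u=T/(1-T)$, which collapses the claim to the Vandermonde convolution $\sum_{k}\binom{r+k}{k}\binom{s+m-k}{m-k}=\binom{r+s+m+1}{m}$, but the route through Lemma 2.2 is the one prepared by the preceding material.
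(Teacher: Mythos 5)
Your proof is correct and takes essentially the same route as the paper's: both expand powers of $T$ via $T=1-(1-T)$ and reduce the resulting coefficient identity to Lemma 2.2, your substitution $i=n-e$ matching the lemma exactly as the paper's reindexing $k-l=n-d-i$ does. The only difference is cosmetic---you work directly with the Laurent family $T^d(1-T)^{-e}$, whereas the paper first multiplies through by $(1-T)^n$ and divides by $T^d$ to reduce to a polynomial identity before applying the same expansion.
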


\begin{proof}
It is sufficient to show that

\begin{equation}
\overset{n}{\underset{k=d}{\sum}} \binom{n}{k} T^k (1-T)^{n-k}=\overset{n-1}{\underset{i=d-1}{\sum}} \binom{i}{d-1} T^d (1-T)^{i-d+1}.
\end{equation}
 
Dividing both sides of Equation $(2.1)$ by $T^d$ and re-numerating indices, we only need to prove   

\begin{equation}
\overset{n-d}{\underset{k=0}{\sum}} \binom{n}{k+d} T^k (1-T)^{n-k-d}=\overset{n-d}{\underset{i=0}{\sum}} \binom{i+d-1}{d-1} (1-T)^{i}.
\end{equation}

Expanding $T^k=[1-(1-T)]^k$ and combining like terms on the left-hand side of Equation $(2.2)$, we have that 

$\overset{n-d}{\underset{k=0}{\sum}} \binom{n}{k+d} T^k (1-T)^{n-k-d}=\overset{n-d}{\underset{k=0}{\sum}} \binom{n}{k+d} \overset{k}{\underset{l=0}{\sum}} (-1)^l \binom{k}{l} (1-T)^l (1-T)^{n-k-d}$

$=\overset{n-d}{\underset{i=0}{\sum}}  \underset{k-l=n-d-i}{\sum} \binom{n}{k+d} (-1)^l \binom{k}{l} (1-T)^{i}=\overset{n-d}{\underset{i=0}{\sum}}  \overset{i}{\underset{l=0}{\sum}} \binom{n}{n+l-i} (-1)^l \binom{n-d-i+l}{l} (1-T)^{i}$

$=\overset{n-d}{\underset{i=0}{\sum}}  \overset{i}{\underset{l=0}{\sum}} \binom{n}{i-l} (-1)^l \binom{n-d-i+l}{l} (1-T)^{i}=\overset{n-d}{\underset{i=0}{\sum}} \binom{i+d-1}{i} (1-T)^{i}=\overset{n-d}{\underset{i=0}{\sum}} \binom{i+d-1}{d-1} (1-T)^{i}$. The last two equalities follows from Lemma $2.2$ and $\binom{i+d-1}{i}=\binom{i+d-1}{d-1}$. This completes the proof of Proposition $2.3$.
\end{proof}

\section{Fine and coarse Hilbert series of the powers of the irrelevant maximal ideal}

Let $\mathfrak{m}$ be the irrelevant maximal ideal in $R=K[x_1, \cdots, x_n]$ and let $\hat{\mathfrak{m}_t}=\mathfrak{m} \cap \hat{R}$ be the irrelevant maximal ideal in $\hat{R}=K[x_1,\cdots, x_{n-t+1}]$. Let $\hat{\mathfrak{m}_t}^s$ be the ideal generated by all monomials of degree $s$ in $\hat{R}$ and $\langle \hat{\mathfrak{m}_t}^s \rangle$ be the ideal generated by $\hat{\mathfrak{m}_t}^s$ in $R$. We will deduce the fine and coarse Hilbert series of $\mathfrak{m}^s$, $\hat{\mathfrak{m}_t}^s$ and $\langle \hat{\mathfrak{m}_t}^s \rangle$. All of those results are well-known. For the reader's convenience, we include these formulas with a short proof here.

\begin{prop}
The fine Hilbert series of $\mathfrak{m}^s$, $\hat{\mathfrak{m}_t}^s$ and $\langle \hat{\mathfrak{m}_t}^s \rangle$ are

$H_{\mathfrak{m}^s} (T_1, \cdots, T_n)=\overset{n}{\underset{i=0}{\prod}} (1-T_i)^{-1}-\overset{s-1}{\underset{k=0}{\sum}} \underset{\underset{i}{\sum} \alpha_i=k}{\sum} T_1^{\alpha_1} \cdots T_n^{\alpha_n},$

$H_{\hat{\mathfrak{m}_t}^s} (T_1, \cdots, T_{n-t+1})=\overset{n-t+1}{\underset{i=0}{\prod}} (1-T_i)^{-1}-\overset{s-1}{\underset{k=0}{\sum}} \underset{\underset{i}{\sum} \alpha_i=k}{\sum} T_1^{\alpha_1} \cdots T_{n-t+1}^{\alpha_{n-t+1}},$ and

$H_{\langle \hat{\mathfrak{m}_t}^s \rangle} (T_1, \cdots, T_{n})=H_{\hat{\mathfrak{m}_t}^s} (T_1, \cdots, T_{n-t+1}) \cdot \overset{n}{\underset{i=n-t+2}{\prod}} (1-T_i)^{-1}$.

\end{prop}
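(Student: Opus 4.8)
The plan is to establish all three fine Hilbert series formulas by directly counting monomials, exactly as was done for the squarefree Veronese ideal in Proposition $2.1$. The guiding principle is that the fine Hilbert series of a monomial ideal is simply the sum of all monomials contained in it, with each variable $x_i$ tracked by the formal variable $T_i$.

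First I would treat $\mathfrak{m}^s$. A monomial $x_1^{\alpha_1}\cdots x_n^{\alpha_n}$ lies in $\mathfrak{m}^s$ precisely when its total degree $\sum_i \alpha_i \ge s$. The cleanest route is the complementary count: the full polynomial ring $R$ has fine Hilbert series $\prod_{i=0}^{n}(1-T_i)^{-1}$ (the product of geometric series $\sum_{\alpha_i \ge 0} T_i^{\alpha_i}$), and $\mathfrak{m}^s$ consists of all monomials in $R$ except those of total degree strictly less than $s$. Subtracting the contribution $\sum_{k=0}^{s-1}\sum_{\sum_i \alpha_i = k} T_1^{\alpha_1}\cdots T_n^{\alpha_n}$ of the low-degree monomials yields the stated formula. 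The second formula, for $\hat{\mathfrak{m}_t}^s$, is literally the same computation carried out in the smaller ring $\hat{R}=K[x_1,\cdots,x_{n-t+1}]$, so it follows verbatim after replacing $n$ with $n-t+1$ throughout.

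The third formula requires a small additional observation about extension of ideals. Since $\langle \hat{\mathfrak{m}_t}^s\rangle$ is generated in $R$ by the monomials generating $\hat{\mathfrak{m}_t}^s$ in $\hat{R}$, a monomial $x_1^{\alpha_1}\cdots x_n^{\alpha_n}$ lies in $\langle \hat{\mathfrak{m}_t}^s\rangle$ if and only if its restriction $x_1^{\alpha_1}\cdots x_{n-t+1}^{\alpha_{n-t+1}}$ to the first $n-t+1$ variables lies in $\hat{\mathfrak{m}_t}^s$, with the exponents $\alpha_{n-t+2},\cdots,\alpha_n$ of the remaining variables completely unconstrained. This separation of variables means the monomials of $\langle \hat{\mathfrak{m}_t}^s\rangle$ factor as a product of a monomial in $\hat{\mathfrak{m}_t}^s$ and an arbitrary monomial in the remaining variables $x_{n-t+2},\cdots,x_n$. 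Summing over all such products factors the Hilbert series as a product, giving $H_{\hat{\mathfrak{m}_t}^s}(T_1,\cdots,T_{n-t+1})\cdot\prod_{i=n-t+2}^{n}(1-T_i)^{-1}$.

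I expect no serious obstacle here, as the author's own remark that these results are well-known and warrant only a short proof confirms. The only point demanding a little care is the membership criterion for the extended ideal $\langle \hat{\mathfrak{m}_t}^s\rangle$ in the third part: one must verify that extending the ideal imposes the degree-$s$ condition only on the first $n-t+1$ variables and genuinely leaves the last $t-1$ variables free, which is what makes the Hilbert series factor multiplicatively. Once that factorization is justified, the formula drops out immediately.
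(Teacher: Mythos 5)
Your proposal is correct and follows essentially the same route as the paper: the complementary count $H_{\mathfrak{m}^s}=\prod_{i}(1-T_i)^{-1}-\sum_{k=0}^{s-1}\sum_{\sum_i\alpha_i=k}T_1^{\alpha_1}\cdots T_n^{\alpha_n}$, the substitution $n\mapsto n-t+1$ for $\hat{\mathfrak{m}_t}^s$, and the membership criterion $\sum_{i=1}^{n-t+1}\alpha_i\ge s$ (equivalently, your restriction-to-the-first-variables condition) yielding the multiplicative factorization for $\langle\hat{\mathfrak{m}_t}^s\rangle$. The only difference is cosmetic: you justify the factorization via the extension-of-ideals observation, while the paper states the degree condition directly and splits the sum.
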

\begin{proof}
A monomial $x_1^{\alpha_1} \cdots  x_n^{\alpha_n}$ is in $\mathfrak{m}^s$ if and only if $\underset{i}{\sum} \alpha_i \ge s$. So the fine Hilbert series of $\mathfrak{m}^s$ is given by

$H_{\mathfrak{m}^s} (T_1, \cdots, T_n)=\underset{\underset{i}{\sum} \alpha_i \ge s}{\sum}T_1^{\alpha_1} \cdots  T_n^{\alpha_n}=\underset{\alpha_i \ge 0}{\sum}T_1^{\alpha_1} \cdots  T_n^{\alpha_n}-\overset{s-1}{\underset{k=0}{\sum}} \underset{\underset{i}{\sum} \alpha_i=k}{\sum} T_1^{\alpha_1} \cdots T_n^{\alpha_n}$

$=\overset{n}{\underset{i=0}{\prod}} (1-T_i)^{-1}-\overset{s-1}{\underset{k=0}{\sum}} \underset{\underset{i}{\sum} \alpha_i=k}{\sum} T_1^{\alpha_1} \cdots T_n^{\alpha_n}$.

$H_{\hat{\mathfrak{m}_t}^s} (T_1, \cdots, T_{n-t+1})$ follows easily from $H_{\mathfrak{m}^s} (T_1, \cdots, T_n)$ by replacing $n$ with $n-t+1$.

Note that a monomial $x_1^{\alpha_1} \cdots  x_n^{\alpha_n}$ is in $\langle \hat{\mathfrak{m}_t}^s \rangle$ if and only if $\overset{n-t+1}{\underset{i=1}{\sum}} \alpha_i \ge s$. So

$H_{\langle \hat{\mathfrak{m}_t}^s \rangle} (T_1, \cdots, T_{n})=\underset{\overset{n-t+1}{\underset{i=1}{\sum}} \alpha_i \ge s}{\sum} T_1^{\alpha_1} \cdots  T_n^{\alpha_n}=\underset{\overset{n-t+1}{\underset{i=1}{\sum}} \alpha_i \ge s}{\sum}T_1^{\alpha_1} \cdots  T_{n-t+1}^{\alpha_{n-t+1}} \cdot \underset{\alpha_i \ge 0}{\sum} \hskip .1 cm \overset{n}{\underset{i=n-t+2}{\prod}} T_{i}^{\alpha_{i}}$

$=H_{\hat{\mathfrak{m}_t}^s} (T_1, \cdots, T_{n-t+1}) \cdot \overset{n}{\underset{i=n-t+2}{\prod}} (1-T_i)^{-1}$.

\end{proof}

Let $T_i=T$ in Proposition $3.1$, we immediately have

\begin{cor}
The coarse Hilbert series of $\mathfrak{m}^s$, $\hat{\mathfrak{m}_t}^s$ and $\langle \hat{\mathfrak{m}_t}^s \rangle$ are

$H_{\mathfrak{m}^s} (T)=(1-T)^{-n}-\overset{s-1}{\underset{k=0}{\sum}} \binom{n+k-1}{k} T^k,$
$H_{\hat{\mathfrak{m_t}^s}} (T)=(1-T)^{-n+t-1}-\overset{s-1}{\underset{k=0}{\sum}} \binom{n-t+k}{k} T^k$ and

$H_{\langle \hat{\mathfrak{m_t}^s} \rangle} (T)=(1-T)^{-n}-(1-T)^{-t+1} \overset{s-1}{\underset{k=0}{\sum}} \binom{n-t+k}{k} T^k$

\end{cor}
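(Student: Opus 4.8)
The plan is to obtain each of the three coarse Hilbert series by specializing the corresponding fine Hilbert series from Proposition $3.1$, setting every $T_i=T$. The one genuinely combinatorial ingredient is the evaluation of the truncating multinomial sums, which I would handle by a stars-and-bars count.

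First I would specialize $H_{\mathfrak{m}^s}(T_1,\cdots,T_n)$. Under $T_i=T$ the product $\prod_{i=1}^n (1-T_i)^{-1}$ becomes $(1-T)^{-n}$. In the subtracted term, each inner sum $\sum_{\sum_i \alpha_i=k} T_1^{\alpha_1}\cdots T_n^{\alpha_n}$ collapses to $c_{n,k}\,T^k$, where $c_{n,k}$ is the number of monomials of total degree $k$ in $n$ variables, i.e.\ the number of solutions in nonnegative integers of $\alpha_1+\cdots+\alpha_n=k$. By stars and bars $c_{n,k}=\binom{n+k-1}{k}$, which yields $H_{\mathfrak{m}^s}(T)=(1-T)^{-n}-\sum_{k=0}^{s-1}\binom{n+k-1}{k}T^k$. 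The formula for $\hat{\mathfrak{m}_t}^s$ is then immediate: since its fine series is obtained from that of $\mathfrak{m}^s$ by replacing $n$ with $n-t+1$, the pole term becomes $(1-T)^{-(n-t+1)}=(1-T)^{-n+t-1}$ and the binomial coefficient becomes $\binom{(n-t+1)+k-1}{k}=\binom{n-t+k}{k}$.

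Finally, for $\langle \hat{\mathfrak{m}_t}^s\rangle$ I would use the product form in Proposition $3.1$. The extra factor $\prod_{i=n-t+2}^n (1-T_i)^{-1}$ runs over $t-1$ indices, so under $T_i=T$ it equals $(1-T)^{-(t-1)}$. Multiplying the already-established expression for $H_{\hat{\mathfrak{m}_t}^s}(T)$ by $(1-T)^{-(t-1)}$ and combining exponents via $-(n-t+1)-(t-1)=-n$ gives $H_{\langle \hat{\mathfrak{m}_t}^s\rangle}(T)=(1-T)^{-n}-(1-T)^{-t+1}\sum_{k=0}^{s-1}\binom{n-t+k}{k}T^k$, as claimed. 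There is no real obstacle here; the only thing requiring care is the bookkeeping of how many variables and factors appear in each case, since this controls both the exponent of $(1-T)$ and the exact shape of the binomial coefficients.
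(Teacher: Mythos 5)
Your proof is correct and is essentially the paper's own argument: the paper obtains Corollary $3.2$ by setting $T_i=T$ in Proposition $3.1$, exactly as you do. Your only addition is to spell out the stars-and-bars count $\binom{n+k-1}{k}$ and the exponent bookkeeping, which the paper leaves implicit.
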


Now we are ready to prove Theorem $1.4$.

\section{Proof of Theorem $1.4$}

In order to prove Theorem $1.4$, we need the following lemma.

\begin{lem}
For any non-negative integer $k$, we have

\begin{equation}
\binom{n+k}{k+d}=\overset{n-1}{\underset{i=d-1}{\sum}} \binom{i}{d-1} \binom{n-i+k-1}{k}
\end{equation}
\end{lem}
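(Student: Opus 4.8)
The plan is to recognize the identity (4.1), after a single shift of the summation index, as an instance of the Vandermonde--Chu (Cauchy) convolution, and then to prove it by extracting the coefficient of a fixed power of $T$ from a product of two negative-binomial series. This is exactly the coefficient-comparison technique already used in Lemma $2.2$, so it fits the style of the paper.

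First I would reindex the right-hand side by setting $j=i-(d-1)$, so that $j$ runs from $0$ to $n-d$. Since $\binom{i}{d-1}=\binom{j+d-1}{d-1}$ and $\binom{n-i+k-1}{k}=\binom{n-d-j+k}{k}$, the right-hand side of (4.1) becomes $\sum_{j=0}^{n-d}\binom{j+d-1}{d-1}\binom{n-d-j+k}{k}$. This displays the sum as a Cauchy product: it is precisely the coefficient of $T^{n-d}$ in the product of the two series $\sum_{j\ge 0}\binom{j+d-1}{d-1}T^{j}$ and $\sum_{l\ge 0}\binom{l+k}{k}T^{l}$, because the constraint $j+l=n-d$ with $j,l\ge 0$ forces $j$ to range over exactly $0\le j\le n-d$, with no nonzero terms omitted.

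Next I would identify these two series as the standard expansions $(1-T)^{-d}=\sum_{j\ge 0}\binom{j+d-1}{d-1}T^{j}$ and $(1-T)^{-(k+1)}=\sum_{l\ge 0}\binom{l+k}{k}T^{l}$. Their product is $(1-T)^{-(d+k+1)}$, whose coefficient of $T^{n-d}$ is $\binom{(n-d)+(d+k+1)-1}{(d+k+1)-1}=\binom{n+k}{d+k}=\binom{n+k}{k+d}$, which is the left-hand side of (4.1). Comparing the two computations of the coefficient of $T^{n-d}$ in $(1-T)^{-(d+k+1)}$ then gives (4.1) at once.

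I do not expect any serious obstacle here: the substance of the argument is a one-line generating-function identity. The only point requiring care is the bookkeeping of the summation ranges, namely checking that the finite sum over $d-1\le i\le n-1$ captures exactly the full convolution coefficient and loses no terms; as noted above, the upper limit $i=n-1$ (equivalently $j=n-d$) is precisely where the dropped terms would begin, so the finite sum and the full Cauchy product agree.
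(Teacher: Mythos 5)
Your proposal is correct and is essentially the paper's own proof: both arguments compute the coefficient of $T^{n-d}$ in $(1-T)^{-(d+k+1)}=(1-T)^{-d}\cdot(1-T)^{-(k+1)}$ in two ways, with your substitution $j=i-(d-1)$ matching the paper's reindexing of the convolution sum. The bookkeeping of summation ranges you flag is handled identically in the paper.
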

\begin{proof}
The coefficient of $T^{n-d}$ in the expansion of $(1-T)^{-(d+k+1)}$ is equal to $\binom{d+k+1+n-d-1}{n-d}=\binom{n+k}{n-d}=\binom{n+k}{d+k}$. For any $0 \le i \le n-d$, the coefficient of $T^i$ in $(1-T)^{-d}$ and the coefficient of $T^{n-d-i}$ in $(1-T)^{-(k+1)}$ are $\binom{d+i-1}{i}=\binom{i+d-1}{d-1}$ and $\binom{k+1+n-d-i-1}{n-d-i}=\binom{k+n-d-i}{k}$ respectively. So the coefficient of $T^{n-d}$ in the expansion of $(1-T)^{-(d+k+1)}=(1-T)^{-d} \cdot (1-T)^{-(k+1)}$ is equal to 
$\overset{n-d}{\underset{i=0}{\sum}} \binom{i+d-1}{d-1} \binom{k+n-d-i}{k}=\overset{n-1}{\underset{i=d-1}{\sum}} \binom{i}{d-1} \binom{n-i+k-1}{k}.$ Now Lemma $4.1$ follows easily by comparing the coefficient of $T^{n-d}$ in the expansion of both sides of $(1-T)^{-(d+k+1)}=(1-T)^{-d} \cdot (1-T)^{-(k+1)}$. 
\end{proof}
By J. Uliczka \cite{Uli}, the Hilbert depth of $M$ is equal to $\max \{r:(1-T)^r H_{M}(T) \hskip .05 cm \textit{non-negative}\}$. So to prove Theorem $1.4$, it is sufficient to show that $H_{I_{n,d}}(T)=(1-T)^{-(d-1)}H_{\hat{\mathfrak{m}}^d} (T)$. 
By Corollary $3.2$, $H_{\hat{\mathfrak{m}}^d} (T)=H_{\hat{\mathfrak{m_d}}^d} (T)=(1-T)^{-n+d-1}-\overset{d-1}{\underset{k=0}{\sum}} \binom{n-d+k}{k} T^k$. Hence we only need to show that 
\begin{equation}
\overset{n}{\underset{k=d}{\sum}} \binom{n}{k} T^k (1-T)^{-k}=(1-T)^{-n}-(1-T)^{-(d-1)} \overset{d-1}{\underset{k=0}{\sum}} \binom{n-d+k}{k} T^k
\end{equation}

Multiply both sides of Equation $(4.2)$ by $(1-T)^n$ and using Equation $(2.1)$, we have that

\begin{equation}
\overset{n-1}{\underset{i=d-1}{\sum}} \binom{i}{d-1} T^d (1-T)^{i-d+1}=1-(1-T)^{n-d+1}\overset{d-1}{\underset{k=0}{\sum}} \binom{n-d+k}{k} T^k
\end{equation}

Dividing both sides of Equation $(4.3)$ by $(1-T)^{n-d+1}$ and expanding $(1-T)^{i-n}$ and $(1-T)^{-(n-d+1)}$
respectively, we have

\begin{equation}
\overset{n-1}{\underset{i=d-1}{\sum}} \binom{i}{d-1} T^d \overset{\infty}{\underset{k=0}{\sum}} \binom{n-i+k-1}{k} T^k=\overset{\infty}{\underset{k=d}{\sum}} \binom{n-d+k}{k} T^k
\end{equation}

Dividing Equation $(4.4)$ by $T^d$, we have

\begin{equation}
\overset{\infty}{\underset{k=0}{\sum}}\overset{n-1}{\underset{i=d-1}{\sum}} \binom{i}{d-1} \binom{n-i+k-1}{k} T^k=\overset{\infty}{\underset{k=d}{\sum}} \binom{n-d+k}{k} T^{k-d}=\overset{\infty}{\underset{k=0}{\sum}} \binom{n+k}{k+d} T^{k}
\end{equation}

Equation $(4.5)$ holds true if and only if $\binom{n+k}{k+d}=\overset{n-1}{\underset{i=d-1}{\sum}} \binom{i}{d-1} \binom{n-i+k-1}{k}$. The latter one follows from Lemma $4.1$. This completes the proof of Theorem $1.4$.

{}

{\footnotesize DEPARTMENT OF MATHEMATICS, ANHUI UNIVERSITY, HEFEI, ANHUI, 230039, CHINA}

$\textit{E-mail address:}$ ge1968@126.com

\medskip

{\footnotesize DEPARTMENT OF MATHEMATICS, SUNY CANTON, 34 CORNELL DRIVE, CANTON, 

\hskip .05 cm NY 13617, USA}

$\textit{E-mail address:}$ linj@canton.edu

\medskip

{\footnotesize DEPARTMENT OF MATHEMATICS, SUNY CANTON, 34 CORNELL DRIVE, CANTON, 

\hskip .05 cm NY 13617, USA}

{\footnotesize DEPARTMENT OF MATHEMATICS, ANHUI ECONOMIC MANAGEMENT INSTITUTE, HEFEI, 

\hskip .05 cm ANHUI, 230059, CHINA}

$\textit{E-mail address:}$ wangy@canton.edu
\end{document}